\newtheorem{lemma}{Lemma}[section]
\newtheorem{theorem}[lemma]{Theorem}
\newtheorem{corollary}[lemma]{Corollary}
\newtheorem{definition}[lemma]{Definition}
\newtheorem{example}[lemma]{Example}
\date{}
\begin{document}

\title[Conditionally  centred moments and compact operators on $L^p$]{Sharp estimates for conditionally  centred moments 
and for compact operators on  $L^p$ spaces}

\author[E. Shargorodsky]{Eugene Shargorodsky}
\address{
Department of Mathematics\\
King's College London\\
Strand, London WC2R 2LS\\
United Kingdom\quad
and\quad
Technische Universit\"at Dresden\\
Fakult\"at Mathematik\\
01062 Dresden\\
Germany}
\email{eugene.shargorodsky@kcl.ac.uk}

\author[T. Sharia]{Teo Sharia}
\address{
Department of Mathematics\\
Royal Holloway\\
University of London\\
Egham, Surrey TW20 0EX\\
United Kingdom}
\email{t.sharia@rhul.ac.uk}

\begin{abstract}
Let $(\Omega, \mathcal{F}, \mathbf{P})$ be a probability space, $\xi$ be a random variable on $(\Omega, \mathcal{F}, \mathbf{P})$, 
$\mathcal{G}$ be a sub-$\sigma$-algebra of $\mathcal{F}$,
and let $\mathbf{E}^\mathcal{G} = \mathbf{ E}(\cdot | \mathcal{G})$ be the corresponding conditional expectation operator.
We obtain sharp estimates for the moments of $\xi - \mathbf{E}^\mathcal{G}\xi$ in terms of the moments of $\xi$. This allows us
to find the optimal constant in the bounded compact approximation property of $L^p([0, 1])$, $1 < p < \infty$.
\end{abstract}

\subjclass[2000]{Primary 60E15, 47A30; Secondary 46B20, 46B28, 46E30, 47B07.}

\maketitle

\section{Introduction: estimates for centred moments}\label{intro}

Let $(\Omega, \mathcal{F}, \mathbf{P})$ be a probability space, $\xi$ be a real valued random variable (r.v.), and $\mathbf{E}\xi$ be the expectation of
$\xi$, i.e. $\mathbf{E}\xi := \int_\Omega \xi(\omega)\, d\mathbf{P}(\omega)$. We will always assume that $\Omega$ consists of more
than one element and that $\mathcal{F}$ is nontrivial, i.e. $\mathcal{F} \not= \{\emptyset, \Omega\}$.
It seems natural to ask what the optimal estimate of the centred $p$-th 
moment of $\xi$ by its $p$-th absolute (uncentred) moment is. In other words, one is looking for the optimal constant 
$c_p = c_p(\Omega, \mathcal{F}, \mathbf{P})$ in the estimate
\begin{equation}\label{1}
\|\xi - \mathbf{E}\xi\|_p = \left(\mathbf{E}|\xi - \mathbf{E}\xi|^p\right)^{1/p} \le c_p \left(\mathbf{E}|\xi|^p\right)^{1/p} = c_p \|\xi\|_p ,
\end{equation}
where $1 \le p < \infty$. For $p = \infty$, the above inequality takes the form
\begin{equation}\label{2}
\|\xi - \mathbf{E}\xi\|_\infty = \mathrm{ess}\sup_{\omega \in \Omega}|\xi(\omega) - \mathbf{E}\xi| \le 
c_\infty\, \mathrm{ess}\sup_{\omega \in \Omega}|\xi(\omega)| = c_\infty \|\xi\|_\infty .
\end{equation}
It follows from H\"older's inequality that $|\mathbf{E}\xi| \le \mathbf{E}|\xi| = \|\xi\|_1 \le \|\xi\|_p$. Hence
$$
\|\xi - \mathbf{E}\xi\|_p \le \|\xi\|_p +\|\mathbf{E}\xi\|_p = \|\xi\|_p +|\mathbf{E}\xi| \le 2\|\xi\|_p.
$$
So, $c_p \le 2$ for all $p \in [1, \infty]$. On the other hand, if $\xi$ is not a constant r.v., then $\eta := \xi - \mathbf{E}\xi \not= 0$, but
$\mathbf{E}\eta = 0$.  Hence $\|\eta- \mathbf{E}\eta\|_p = \|\eta\|_p$, and $c_p \ge 1$ for all $p \in [1, \infty]$.

It is well known that $c_2 = 1$. Indeed,
$$
\mathbf{E}|\xi - \mathbf{E}\xi|^2 = \mathbf{E}(\xi - \mathbf{E}\xi)^2 = \mathbf{E}\xi^2 - (\mathbf{E}\xi)^2 \le \mathbf{E}\xi^2 .
$$
Suppose that $(\Omega, \mathcal{F}, \mathbf{P})$ is nonatomic or, more generally, that for every $\alpha \in (0, 1)$, 
there exists $\xi$ such that $\mathbf{P}(\xi = 1) = \alpha$
and $\mathbf{P}(\xi = 0) = 1 - \alpha$. It is clear that $\mathbf{E}|\xi| = \mathbf{E}\xi = \alpha$, 
$\mathbf{E}|\xi - \mathbf{E}\xi| = 2\alpha(1 - \alpha)$, and
$$
\frac{\|\xi - \mathbf{E}\xi\|_1}{\|\xi\|_1} = 2(1 - \alpha) .
$$
Sending $\alpha$ to $0$, one concludes that $c_1 = 2$. Similarly, if $\mathbf{P}(\xi = 1) = \alpha$
and $\mathbf{P}(\xi = -1) = 1 - \alpha$, then $\mathbf{E}\xi = 2\alpha - 1$, $\|\xi\|_\infty = 1$, and
$\|\xi - \mathbf{E}\xi\|_\infty = \max\{2\alpha, 2(1 - \alpha)\}$. Sending $\alpha$ to $1$ or to $0$, one concludes that $c_\infty = 2$.
Putting the above information together, one gets
\begin{equation}\label{allelem}
c_1 = 2 = c_\infty , \quad\quad c_2 = 1 , \quad\quad  1 \le c_p \le 2 \quad\mbox{for all}\quad p \in (1, \infty) ,
\end{equation}
where the first equality holds if there are $A \in \mathcal{F}$ with arbitrarily small positive $\mathbf{P}(A)$, e.g., if $\mathbf{P}$ is nonatomic.

The constant $c_p$ is obviously the norm of the operator $\mathbf{ C} : L^p(\Omega, \mathbf{P}) \to L^p(\Omega, \mathbf{P})$,
\begin{equation}\label{OpC}
\xi \mapsto \mathbf{C}\xi :=\xi - \mathbf{E}\xi .
\end{equation}
Applying the Riesz-Thorin interpolation theorem (see, e.g., \cite[Theorem 1.1.1]{BL76}) to this operator,
one deduces from the equalities in \eqref{allelem} that 
\begin{equation}\label{Rol}
c_p \le 2^{\left|1 - \frac2p\right|} , \quad 1 < p < \infty
\end{equation}
(see \cite{R90}). 

Let $(\Omega, \mathcal{F}, \mathbf{P}) = ([0, 1], \mathcal{L}, \lambda)$, where $\lambda$ is the standard Lebesgue measure on $[0, 1]$
and $\mathcal{L}$ is the $\sigma$-algebra of Lebesgue measurable subsets of $[0, 1]$. (Equivalently, one can assume that
$\Omega$ is a complete separable metric space, $\mathcal{F} = \mathcal{B}$ is the Borel $\sigma$-algebra of $\Omega$, and $\mathbf{P}$
is nonatomic; see, e.g., \cite[Theorem 9.2.2]{B07}.) C. Franchetti proved (see \cite{F90}) that in this case,
\begin{equation}\label{cp}
c_p = c_p([0, 1], \mathcal{L}, \lambda) = 
\max_{0 < \alpha < 1} C_p(\alpha) =: C_p \quad\mbox{for all}\quad  1 < p < \infty ,
\end{equation}
where
\begin{equation}\label{Cp}
C_p(\alpha) := 
 \left(\alpha^{p - 1} + (1 - \alpha)^{p - 1}\right)^{\frac1p} \left(\alpha^{\frac1{p - 1}} + (1 - \alpha)^{\frac1{p - 1}}\right)^{1 - \frac1p} .
\end{equation}
A simple analysis  shows that
\begin{align}
& C_1 := \lim_{p \to 1 + 0} C_p = 2 = c_1 , \quad\quad  C_\infty :=\lim_{p \to \infty} C_p = 2 = c_\infty ,  \label{C1}\\
& C_2 = 1 = c_2 ,   \quad\quad  C_{p'} = C_p \quad\mbox{for}\quad p' = \frac{p}{p - 1}\, , \quad\quad
 C_p \le 2^{\left|1 - \frac2p\right|} \nonumber
\end{align}
(see  \cite{F90} and \eqref{allelem}). Given \eqref{cp}, the last inequality is the same as \eqref{Rol}, 
while the equality $C_{p'} = C_p$ follows also from the fact that the adjoint
of the operator $\mathbf{ C} : L^p(\Omega, \mathbf{P}) \to L^p(\Omega, \mathbf{P})$ (see \eqref{OpC}) is
$\mathbf{C} : L^{p'}(\Omega, \mathbf{P}) \to L^{p'}(\Omega, \mathbf{P})$, and hence
$$
c_{p'} = \|\mathbf{C}\|_{L^{p'}(\Omega, \mathbf{P}) \to L^{p'}(\Omega, \mathbf{P})} 
= \|\mathbf{C}^*\|_{L^{p'}(\Omega, \mathbf{P}) \to L^{p'}(\Omega, \mathbf{P})} 
= \|\mathbf{C}\|_{L^{p}(\Omega, \mathbf{P}) \to L^{p}(\Omega, \mathbf{P})} = c_p .
$$ 
One can also find the following explicit values in \cite{F90}:
$$
C_3 = \frac13 \left(17 + 7\sqrt{7}\right)^{1/3} = 1.0957\dots , \quad C_4 = \left(1 + \frac23\sqrt{3}\right)^{1/4} = 1.21156\dots
$$
The proof of \eqref{cp} in \cite{F90} was quite complicated. A much simpler alternative proof was produced by T.F. M\'ori (see \cite{M09}), who
was apparently unaware of C. Franchetti's work. The latter proof goes as follows. First, consider a r.v. $\xi_\alpha$ such that 
$\mathbf{P}(\xi_\alpha = -b) = 1 - \alpha$ and $\mathbf{P}(\xi_\alpha = 1 - b) = \alpha$, where
\begin{equation}\label{b}
b :=  \frac{\alpha^{\frac{1}{p - 1}}}{\alpha^{\frac{1}{p - 1}} + (1 - \alpha)^{\frac{1}{p - 1}}}\, .
\end{equation}
Then  
\begin{align*}
& \mathbf{ E}\xi_\alpha = 
\frac{\alpha (1 - \alpha)^{\frac{1}{p - 1}} - (1 - \alpha)\alpha^{\frac{1}{p - 1}}}{\alpha^{\frac{1}{p - 1}} + (1 - \alpha)^{\frac{1}{p - 1}}}\, , \quad
\mathbf{ E}|\xi_\alpha|^p = \frac{\alpha (1 - \alpha)}{\left(\alpha^{\frac{1}{p - 1}} + (1 - \alpha)^{\frac{1}{p - 1}}\right)^{p - 1}}\, ,  \\
& \mathbf{E}|\xi_\alpha - \mathbf{E}\xi_\alpha|^p = \alpha (1 - \alpha) \left(\alpha^{p - 1} + (1 - \alpha)^{p - 1}\right) ,  \\
& \frac{\left(\mathbf{E}|\xi_\alpha - \mathbf{E}\xi_\alpha|^p\right)^{1/p}}{\left(\mathbf{E}|\xi_\alpha|^p\right)^{1/p}} = 
 \left(\alpha^{p - 1} + (1 - \alpha)^{p - 1}\right)^{\frac1p} \left(\alpha^{\frac1{p - 1}} + (1 - \alpha)^{\frac1{p - 1}}\right)^{1 - \frac1p} ,
\end{align*}
which shows that $c_p \ge C_p$. It will be important to us in the next section that the opposite inequality
\begin{equation}\label{cCp}
c_p(\Omega, \mathcal{F}, \mathbf{P}) \le C_p
\end{equation}
holds for all probability spaces $(\Omega, \mathcal{F}, \mathbf{P})$. The main ingredient that makes T.F. M\'ori's proof easier than
that of C. Franchetti's is the observation that
every zero mean probability distribution on $\mathbb{R}$ is a mixture of distributions concentrated on two points and 
having zero mean (see \cite[Lemma 14.4]{K02}
for a beautiful elementary proof, which is attributed there to K.L. Chung). This allows one to reduce the proof of \eqref{cCp} to showing that
$$
\frac{\left(\mathbf{E}|\xi - \mathbf{E}\xi|^p\right)^{1/p}}{\left(\mathbf{E}|\xi|^p\right)^{1/p}} \le C_p
$$
holds for every r.v. $\xi$ that takes only two values. The latter is an elementary although not an entirely trivial calculation.

Yet another proof of \eqref{cp} was produced by G. Lewicki and L. Skrzypek (see \cite{LS16}), who were apparently unaware of 
T.F. M\'ori's work. They considered the case where $\Omega = \{1, \dots, n\}$ and $\mathbf{ P}$ is the uniform distribution:
$\mathbf{ P}(k) = \frac1n$, $k = 1, \dots, n$, and proved that for $n = 3, 4$ and for all sufficiently large $n$, one has
\begin{equation}\label{cpn}
c_p = \max\left\{C_p\left(\frac{k_1}{n}\right), \ C_p\left(\frac{k_2}{n}\right)\right\} ,
\end{equation}(see \eqref{Cp}), where 
$$
 k_1 := \max\left\{k \in \mathbb{N} : \ \frac{k}n  \le \alpha_p \right\} , \quad
 k_2 := \min\left\{k \in \mathbb{N} : \ \alpha_p  \le \frac{k}n < \frac{1}2\right\} ,
$$
and $\alpha_p \in (0, 1/6)$ is the unique point at which $C_p(\alpha)$ attains its global maximum in $[0, 1/2]$.
They also conjectured that the above result holds for all $n \in \mathbb{N}$. (If $n = 3, 4$, then the inequality 
$\frac{k}n < \frac{1}2$ means that $k = 1$, and \eqref{cpn} takes the form
$$
c_p = \frac{ \left((n - 1)^{p - 1} + 1\right)^{\frac1p} \left((n - 1)^{\frac1{p - 1}} + 1\right)^{1 - \frac1p}}{n}\, .
$$
This result was obtained originally in \cite{S09} for $n = 2, 3, 4$.) G. Lewicki and L. Skrzypek showed that $c_p$ in \eqref{cpn} tends to
$c_p$ in \eqref{cp} as $n \to \infty$ and recovered C. Franchetti's result.

As mentioned above,
$$
1 \le c_p(\Omega, \mathcal{F}, \mathbf{P}) \le C_p
$$
holds for all probability spaces $(\Omega, \mathcal{F}, \mathbf{P})$. It turns out that for every $c \in [1, C_p]$
there exist a probability space $(\Omega, \mathcal{F}, \mathbf{P})$ such that $c_p(\Omega, \mathcal{F}, \mathbf{P}) = c$.
Indeed, let $\Omega = \{-1, 1\}$,  $\mathbf{P}(-1) = 1 - \alpha$, $\mathbf{P}(1) = \alpha$. Then $c_p = C_p(\alpha)$
(see \cite{M09}), and one can choose $\alpha$ in such a way that $c_p = c$, since
$$
C_p(1/2) = 1 = \lim_{\alpha \to 0} C_p(\alpha) = \lim_{\alpha \to 1} C_p(\alpha) , \quad\quad
C_p = \max_{0 < \alpha < 1} C_p(\alpha) .
$$ 

All the above results remain true for complex valued random variables 
(\cite[Ch. IV, Miscellaneous theorems and examples, 13]{Zyg}, see also \cite{HK}, \cite{MS}).
In the next section, we extend them to conditional expectations and then use the obtained results in Section \ref{3approx} to
find the optimal constant in the bounded compact approximation property of $L^p([0, 1])$, $1 < p < \infty$.

\section{Estimates for conditionally  centred moments}\label{main}

Let $(\Omega, \mathcal{F}, \mathbf{P})$ be a probability space, $\mathcal{G}$ be a sub-$\sigma$-algebra of $\mathcal{F}$,
and let $\mathbf{E}^\mathcal{G} = \mathbf{ E}(\cdot | \mathcal{G})$ be the corresponding conditional expectation operator.
Then $\mathbf{E}^\mathcal{G} : L^p(\Omega, \mathcal{F}, \mathbf{P}) \to L^p(\Omega, \mathcal{G}, \mathbf{P})$, $1 \le p \le \infty$
is a contractive projection that preserves constants, i.e.
\begin{align*}
& \left\|\mathbf{E}^\mathcal{G}\xi\right\|_p \le \|\xi\|_p , \quad \mathbf{E}^\mathcal{G}\left(\mathbf{E}^\mathcal{G} \xi\right) = \mathbf{E}^\mathcal{G} \xi
\quad\mbox{for all}\quad \xi \in L^p(\Omega, \mathcal{F}, \mathbf{P}) , \\
& \mathbf{E}^\mathcal{G} \mathbbm{1} = \mathbbm{1} ,
\end{align*}
where $\mathbbm{1}(\omega) = 1$ a.s. (see, e.g., \cite[Section 2.1, Theorem 9 and Section 2.2, Theorem 1]{R95} or \cite[Lemma 6.1.1]{AK06}).
In fact, every contractive projection on $L^p(\Omega, \mathcal{F}, \mathbf{P})$, $p \in [1, \infty)\setminus\{2\}$ that preserves constants
is the conditional expectation operator $\mathbf{E}^\mathcal{G}$ for a certain sub-$\sigma$-algebra $\mathcal{G} \subseteq \mathcal{F}$
(see \cite{A66}, \cite{D65}, \cite{S66}, and \cite[Section 2.2, Theorem 6]{R95}). 

{\bf Remark.}\ The conditional expectation operator $\mathbf{E}^\mathcal{G}$ is a contractive projection on a wide class of Banach 
function spaces that includes all rearrangement invariant spaces, e.g. Orlicz and Lorentz spaces (see \cite{C83}, \cite[Theorem 2.a.4]{LT2},
and \cite[Ch. 2, Theorem 4.8]{BS88}). The survey paper \cite{R01} contains wealth of information on contractive projections in Banach 
function spaces and on their representability as conditional expectation operators.
An example of a Banach function space over $([0, 1], \mathcal{L}, \lambda)$, on which 
$\mathbf{E}^\mathcal{G}$ is unbounded for a certain sub-$\sigma$-algebra $\mathcal{G}$ can be found in \cite[Example 4.8]{PR13}.

We are interested in the best constant $c_p = c_p(\Omega, \mathcal{F}, \mathcal{G}, \mathbf{P})$ in the estimate
\begin{equation}\label{1cond}
\left\|\xi - \mathbf{E}^\mathcal{G}\xi\right\|_p  \le c_p \|\xi\|_p , \quad 1 \le p \le \infty ,
\end{equation}
i.e. in the norm of the operator $ I - \mathbf{E}^\mathcal{G} : L^p(\Omega, \mathcal{F}, \mathbf{P}) \to L^p(\Omega, \mathcal{F}, \mathbf{P})$,
\begin{equation}\label{1norm}
c_p(\Omega, \mathcal{F}, \mathcal{G}, \mathbf{P}) = 
\left\|I - \mathbf{E}^\mathcal{G}\right\|_{L^p(\Omega, \mathcal{F}, \mathbf{P}) \to L^p(\Omega, \mathcal{F}, \mathbf{P})} ,
\end{equation}
where $I$ is the identity operator. 

Similarly to Section \ref{intro}, one has the following:
$$
\left\|\mathbf{E}^\mathcal{G}\right\| = 1 \ \Longrightarrow \ \left\|I - \mathbf{E}^\mathcal{G}\right\| \le 2 .
$$
If $\mathcal{G} \not= \mathcal{F}$, then there exists a r.v.  $\xi$ such that $\eta := \xi - \mathbf{E}^\mathcal{G}\xi \not= 0$. Since
$\mathbf{E}^\mathcal{G}\eta = 0$, one has  $\|\eta- \mathbf{E}^\mathcal{G}\eta\|_p = \|\eta\|_p$, and 
$\left\|I - \mathbf{E}^\mathcal{G}\right\| \ge 1$ for all $p \in [1, \infty]$. For $p =2$, it follows from
$$
\mathbf{E}|\xi - \mathbf{E}^\mathcal{G}\xi|^2 
= \mathbf{E}|\xi|^2 -  \mathbf{E}|\mathbf{E}^\mathcal{G}\xi|^2 \le \mathbf{E}|\xi|^2 
$$
(see \cite[Theorem 6.1(vi)]{K02}) that $c_2(\Omega, \mathcal{F}, \mathcal{G}, \mathbf{P}) = 1$. The following result
is an analogue of \eqref{cCp}. 
\begin{theorem}\label{upper}
\begin{equation}\label{upperest}
c_p(\Omega, \mathcal{F}, \mathcal{G}, \mathbf{P}) \le C_p , \quad 1 \le p \le \infty
\end{equation} 
(see \eqref{cp}--\eqref{C1}, \eqref{1norm}).
\end{theorem}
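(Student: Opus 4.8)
The plan is to reduce \eqref{upperest} to the unconditional estimate \eqref{cCp} through an approximation of $\mathcal{G}$ by finite sub-$\sigma$-algebras. The endpoints come for free: by \eqref{C1} one has $C_1 = C_\infty = 2$, and since $\mathbf{E}^\mathcal{G}$ is a contractive projection, $\|I - \mathbf{E}^\mathcal{G}\|_{L^p \to L^p} \le \|I\| + \|\mathbf{E}^\mathcal{G}\| \le 2$; so I would only need to treat $1 < p < \infty$.

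First I would dispose of the case of a finite sub-$\sigma$-algebra $\mathcal{G}_0$, say generated by a partition $\Omega = A_1 \sqcup \dots \sqcup A_n$ into sets of positive measure. On each atom $A_i$ the function $\mathbf{E}^{\mathcal{G}_0}\xi$ is the constant $\mathbf{P}(A_i)^{-1}\int_{A_i}\xi\,d\mathbf{P}$, so $(\xi - \mathbf{E}^{\mathcal{G}_0}\xi)|_{A_i}$ is exactly $\mathbf{C}$ applied to $\xi|_{A_i}$ on the probability space $\big(A_i, \mathcal{F}\cap A_i, \mathbf{P}(\cdot)/\mathbf{P}(A_i)\big)$ (see \eqref{OpC}). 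Applying \eqref{cCp} on each of these spaces yields $\int_{A_i}|\xi - \mathbf{E}^{\mathcal{G}_0}\xi|^p\,d\mathbf{P} \le C_p^p\int_{A_i}|\xi|^p\,d\mathbf{P}$, and summing over $i$ gives \eqref{upperest} for $\mathcal{G}_0$.

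To pass to a general $\mathcal{G}$, I would fix $\xi \in L^p$, set $\zeta := \mathbf{E}^\mathcal{G}\xi$, and observe that $\sigma(\zeta) \subseteq \mathcal{G}$, so by the tower property $\mathbf{E}^{\sigma(\zeta)}\xi = \mathbf{E}^{\sigma(\zeta)}\zeta = \zeta$; hence $\xi - \mathbf{E}^\mathcal{G}\xi = \xi - \mathbf{E}^{\sigma(\zeta)}\xi$ and it is enough to estimate the latter. Now $\sigma(\zeta)$ is countably generated: enumerating $\mathbb{Q} = \{r_1, r_2, \dots\}$ and letting $\mathcal{G}_n := \sigma\big(\{\zeta \le r_1\}, \dots, \{\zeta \le r_n\}\big)$ gives an increasing sequence of finite sub-$\sigma$-algebras with $\sigma\big(\bigcup_n \mathcal{G}_n\big) = \sigma(\zeta)$. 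The $L^p$ martingale convergence theorem (L\'evy's upward theorem) then gives $\mathbf{E}^{\mathcal{G}_n}\xi \to \mathbf{E}^{\sigma(\zeta)}\xi$ in $L^p$, and combined with the finite case this yields
\[
\|\xi - \mathbf{E}^\mathcal{G}\xi\|_p = \|\xi - \mathbf{E}^{\sigma(\zeta)}\xi\|_p = \lim_{n\to\infty}\|\xi - \mathbf{E}^{\mathcal{G}_n}\xi\|_p \le C_p\|\xi\|_p ,
\]
as required.

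The only ingredient beyond \eqref{cCp} and elementary manipulations is the $L^p$ martingale convergence theorem, so I expect the main obstacle — or rather the one place demanding care — to be the reduction to a \emph{countably generated} sub-$\sigma$-algebra in the last step: for a $\mathcal{G}$ that is not countably generated there need be no increasing sequence of finite sub-$\sigma$-algebras of $\mathcal{G}$ whose union generates $\mathcal{G}$, and working with a mere net of finite sub-$\sigma$-algebras would not suffice; replacing $\mathcal{G}$ by $\sigma(\mathbf{E}^\mathcal{G}\xi)$, which costs nothing since it leaves $\mathbf{E}^\mathcal{G}\xi$ unchanged, is what makes the approximation legitimate.
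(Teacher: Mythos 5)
Your proof is correct, and its core is the same as the paper's: on each atom of a partition consisting of $\mathcal{G}$-sets, $\xi-\mathbf{E}^{\mathcal{G}}\xi$ is (exactly or nearly) the centred variable of \eqref{OpC} for the normalised restricted measure, so \eqref{cCp} applies atom by atom and the $p$-th powers add up. Where you genuinely diverge is in the passage from partitions to a general $\mathcal{G}$. The paper approximates $\mathbf{E}^{\mathcal{G}}\xi$ uniformly a.s.\ by a \emph{countably valued} $\mathcal{G}$-measurable function; the resulting countable partition $\{A_n\}\subset\mathcal{G}$ has the property that on each $A_n$ the conditional expectation differs from the average $\mathbf{P}(A_n)^{-1}\int_{A_n}\xi\,d\mathbf{P}$ by less than $\varepsilon$, and a $2\varepsilon$ triangle-inequality argument finishes the proof with no limit theorem at all. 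You instead make the nice observation that replacing $\mathcal{G}$ by $\sigma(\mathbf{E}^{\mathcal{G}}\xi)$ costs nothing (tower property), which renders the relevant $\sigma$-algebra countably generated, and then invoke $L^p$ martingale convergence along an increasing sequence of finite sub-$\sigma$-algebras; this is a valid and arguably more conceptual route, at the price of a nontrivial external input (L\'evy's upward theorem in $L^p$) where the paper needs only the approximation of a measurable function by countably valued ones. Two cosmetic points to tidy up: since the constants $a_n$ in the paper (and the underlying theory) may be complex, the level sets $\{\zeta\le r_i\}$ should be replaced by $\{\mathrm{Re}\,\zeta\le r\}\cap\{\mathrm{Im}\,\zeta\le s\}$ with $r,s$ rational when $\zeta$ is complex valued; and the atoms of your finite $\sigma$-algebras $\mathcal{G}_n$ may have measure zero, which is harmless (they contribute nothing to the $L^p$ norms) but should be acknowledged when you write the atom-by-atom sum.
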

\begin{proof}
Since $C_1 = 2 = C_\infty$, it follows from the above that one only needs to consider the case $1 < p < \infty$. 
Take any $\varepsilon > 0$ and any $\xi \in L^p(\Omega, \mathcal{F}, \mathbf{P})$. Since 
$\mathbf{E}^\mathcal{G}\xi \in L^p(\Omega, \mathcal{G}, \mathbf{P})$, it can be approximated by a countably valued 
$\mathcal{G}$-measurable function, i.e. 
there exist pairwise disjoint sets $A_n \in \mathcal{G}$
and numbers $a_n \in \mathbb{C}$, $n = 1, \dots, N$, $N \in \mathbb{N}\cup\{\infty\}$ such that $\mathbf{P}(A_n) > 0$,
$\cup_{n = 1}^N A_n = \Omega$, and
$$
\left|\mathbf{E}^\mathcal{G}\xi - \sum_{n = 1}^N a_n \mathbbm{1}_{A_n}\right| < \varepsilon \quad\mbox{a.s.} 
$$
Then 
\begin{align*}
& \left|\frac{1}{\mathbf{P}(A_n)} \int_{A_n} \xi\, d\mathbf{P} - a_n\right| 
= \left|\frac{1}{\mathbf{P}(A_n)} \int_{A_n} \mathbf{E}^\mathcal{G}\xi\, d\mathbf{P} - a_n\right| \\
& = \left|\frac{1}{\mathbf{P}(A_n)} \int_{A_n} \left(\mathbf{E}^\mathcal{G}\xi - a_n\right)\, d\mathbf{P}\right| 
 =  \left|\frac{1}{\mathbf{P}(A_n)} 
\int_{A_n} \left(\mathbf{E}^\mathcal{G}\xi - \sum_{k = 1}^N a_k \mathbbm{1}_{A_k}\right)\, d\mathbf{P}\right| \\
& \le \frac{1}{\mathbf{P}(A_n)} 
\int_{A_n} \left|\mathbf{E}^\mathcal{G}\xi - \sum_{k = 1}^N a_k \mathbbm{1}_{A_k}\right|\, d\mathbf{P} < \varepsilon .
\end{align*}
Hence
\begin{align*}
& \left\|\xi - \mathbf{E}^\mathcal{G}\xi\right\|_p  \le 
\left\|\xi - \sum_{n = 1}^N \left(\frac{1}{\mathbf{P}(A_n)} \int_{A_n} \xi\, d\mathbf{P}\right) \mathbbm{1}_{A_n}\right\|_p \\
&+ \left\|\sum_{n = 1}^N \left(\frac{1}{\mathbf{P}(A_n)} \int_{A_n} \xi\, d\mathbf{P}\right) \mathbbm{1}_{A_n}
- \sum_{n = 1}^N a_n \mathbbm{1}_{A_n}\right\|_p 
+ \left\|\sum_{n = 1}^N a_n \mathbbm{1}_{A_n} - \mathbf{E}^\mathcal{G}\xi\right\|_p \\
&< \left\|\xi - \sum_{n = 1}^N \left(\frac{1}{\mathbf{P}(A_n)} \int_{A_n} \xi\, d\mathbf{P}\right) \mathbbm{1}_{A_n}\right\|_p 
+ 2\varepsilon .
\end{align*}
Applying \eqref{cCp} to the probability spaces $\left(A_n, \mathcal{F}_n, \frac{1}{\mathbf{P}(A_n)} \mathbf{P}\right)$, where
$$
\mathcal{F}_n := \{A\cap A_n | \ A \in \mathcal{F}\},
$$
one gets
\begin{align*}
&\left\|\xi - \sum_{n = 1}^N \left(\frac{1}{\mathbf{P}(A_n)} \int_{A_n} \xi\, d\mathbf{P}\right) \mathbbm{1}_{A_n}\right\|_p^p \\
&= \sum_{n = 1}^N \int_{A_n} \left|\xi(\omega) - \frac{1}{\mathbf{P}(A_n)} \int_{A_n} \xi\, d\mathbf{P}\right|^p d\mathbf{P}(\omega) \\
& \le \sum_{n = 1}^N C_p^p \int_{A_n} \left|\xi(\omega)\right|^p d\mathbf{P}(\omega) = C_p^p \|\xi\|_p^p .
\end{align*}
So,
$$
\left\|\xi - \mathbf{E}^\mathcal{G}\xi\right\|_p \le C_p \|\xi\|_p + 2\varepsilon \quad\mbox{for all}\quad \varepsilon > 0 ,
$$
i.e.
$$
\left\|\xi - \mathbf{E}^\mathcal{G}\xi\right\|_p \le C_p \|\xi\|_p \quad\mbox{for all}\quad \xi \in L^p(\Omega, \mathcal{F}, \mathbf{P}) .
$$
\end{proof}

\begin{example}
{\rm One might ask whether $\mathbf{E}^\mathcal{G}\xi$ is a ``better approximation" of $\xi$ in the $L^p$ norm than 
$\mathbf{E}\xi$, i.e. whether the inequality $c_p(\Omega, \mathcal{F}, \mathcal{G}, \mathbf{P}) \le c_p(\Omega, \mathcal{F}, \mathbf{P})$
holds. The following example shows that, in general, this is not the case. Let $1 < p < \infty$,
$\alpha_p \in (0, 1)$ be a point at which 
$C_p(\alpha)$ attains its maximum (see \eqref{Cp}), $\Omega = \{-1, 0, 1\}$, $\mathbf{P}(-1) = \tau(1 - \alpha_p)$, 
$\mathbf{P}(1) = \tau\alpha_p$, $\mathbf{P}(0) = 1 - \tau$, $0 < \tau < 1$, and
$$
\mathcal{G} = \Big\{\emptyset, \{0\}, \{-1, 1\}, \Omega\Big\} .
$$
If $\tau$ is close to 0, then it is natural to expect $c_p(\Omega, \mathcal{F}, \mathbf{P})$ to be close to the constant $c_p$ for the
probability space consisting of two points with probabilities $1 - \tau$ and $\tau$, i.e. to $C_p(1 - \tau)$ and hence to 1 (see the end of 
Section \ref{intro}). On the other hand, if $\xi$ is a random variable supported by $\{-1, 1\}$, then $\mathbf{ E}^{\mathcal{G}}\xi$ is also
supported by the set $\{-1, 1\}$, where it is constant. Hence $c_p(\Omega, \mathcal{F}, \mathcal{G}, \mathbf{P})$ is greater than or
equal to the constant $c_p$ for the probability space consisting of two points $1$ and $-1$ with probabilities $\alpha_p$ and $1 - \alpha_p$, i.e. to $C_p(\alpha_p) = C_p$.
Then one has $c_p(\Omega, \mathcal{F}, \mathcal{G}, \mathbf{P}) = C_p$ due to Theorem \ref{upper}. Here is a more detailed argument.

For any r.v. $\xi$, one has
$$
\mathbf{E}\xi = \tau(1 - \alpha_p)\xi(-1) +  \tau\alpha_p \xi(1) + (1 - \tau)\xi(0) . 
$$
Consider the random variables $\eta$, $\zeta$, and $\xi_0$ defined as follows
\begin{align*}
& \eta(\pm 1) = \mathbf{E}\xi , \quad \eta(0) = 0 , \\
& \zeta(\pm 1) = 0 ,  \quad  \zeta(0) = \tau(1 - \alpha_p)\xi(-1) +  \tau\alpha_p \xi(1) , \\
& \xi_0(\pm 1) = 0 ,  \quad  \xi_0(0) = (1 - \tau)\xi(0) .
\end{align*}
It is easy to see that
\begin{align*}
& \|\eta\|_p = \tau^{1/p} |\mathbf{E}\xi| \le \tau^{1/p} \|\xi\|_p \\
&  \|\zeta\|_p = (1 - \tau)^{1/p} \tau |(1 - \alpha_p)\xi(-1) +  \alpha_p \xi(1)| \\
& \le (1 - \tau)^{1/p} \tau \left((1 - \alpha_p)|\xi(-1)|^p +  \alpha_p |\xi(1)|^p\right)^{1/p} \le (1 - \tau)^{1/p} \tau^{1 - 1/p} \|\xi\|_p , \\
& \eta(\omega) + \zeta(\omega) + \xi_0(\omega) = \mathbf{E}\xi \quad\mbox{for all}\quad \omega \in \Omega .
\end{align*}
Hence 
\begin{align*}
&\|\xi - \mathbf{E}\xi\|_p = \|\xi - (\eta + \zeta +\xi_0)\|_p \le \|\xi - \xi_0\|_p + \|\eta\|_p + \|\zeta\|_p \\
& \le \|\xi\|_p + \tau^{1/p} \|\xi\|_p + (1 - \tau)^{1/p} \tau^{1 - 1/p} \|\xi\|_p \le \left(1 + \tau^{1/p} + \tau^{1 - 1/p}\right) \|\xi\|_p .
\end{align*}
So, 
$$
c_p(\Omega, \mathcal{F}, \mathbf{P}) \le 1 + \tau^{1/p} + \tau^{1 - 1/p} ,
$$
and choosing a sufficiently small $\tau$, one can make $c_p(\Omega, \mathcal{F}, \mathbf{P})$ arbitrarily close to $1$. On the other
hand, let $\xi(-1) = -b$, $\xi(1) = 1 - b$, $\xi(0) = 0$, where $b$ is defined by \eqref{b} with $\alpha = \alpha_p$. Then the same calculations
as in Section \ref{intro} show that 
\begin{align*}
& \mathbf{ E}^{\mathcal{G}}\xi(\pm1) = 
\frac{\alpha_p (1 - \alpha_p)^{\frac{1}{p - 1}} - (1 - \alpha_p)\alpha_p^{\frac{1}{p - 1}}}{\alpha_p^{\frac{1}{p - 1}} + (1 - \alpha_p)^{\frac{1}{p - 1}}}\, , 
\quad \mathbf{ E}^{\mathcal{G}}\xi(0) = 0 ,  \\
& \frac{\left\|\xi - \mathbf{E}^{\mathcal{G}}\xi\right\|_p}{\|\xi\|_p} = 
 \left(\alpha_p^{p - 1} + (1 - \alpha_p)^{p - 1}\right)^{\frac1p} \left(\alpha_p^{\frac1{p - 1}} + (1 - \alpha_p)^{\frac1{p - 1}}\right)^{1 - \frac1p} = C_p .
\end{align*}
Hence $c_p(\Omega, \mathcal{F}, \mathcal{G}, \mathbf{P}) = C_p$.} \hfill $\Box$
\end{example}

\begin{theorem}\label{ccond}
For every $p \in [1, \infty]$ and every $c \in [1, C_p]$, there exists a sub-$\sigma$-algebra $\mathcal{G} \subset \mathcal{L}$ such that
\begin{equation}\label{ccondp}
c_p([0, 1], \mathcal{L}, \mathcal{G}, \lambda) = c .
\end{equation}
\end{theorem}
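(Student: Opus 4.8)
The plan is to realise every value $c\in[1,C_p]$ by one explicit family of sub-$\sigma$-algebras, obtained by pulling back onto $[0,1]$ the first-coordinate $\sigma$-algebra of a product of $[0,1]$ with a two-point space. The endpoint $c=C_p$ is immediate: the trivial $\sigma$-algebra $\mathcal{G}=\{\emptyset,[0,1]\}$ gives $\mathbf{E}^\mathcal{G}=\mathbf{E}$, hence $c_p([0,1],\mathcal{L},\mathcal{G},\lambda)=c_p([0,1],\mathcal{L},\lambda)=C_p$ by \eqref{cp}. For $c\in[1,C_p)$ I would reduce to the sharp two-point estimate recorded at the end of Section \ref{intro}: writing $m_\alpha$ for the probability measure on $\{1,2\}$ with $m_\alpha(\{1\})=\alpha$ and $m_\alpha(\{2\})=1-\alpha$, one has $c_p\big(\{1,2\},m_\alpha\big)=C_p(\alpha)$ (see \cite{M09}; for $p\in\{1,\infty\}$ this reads $2\max(\alpha,1-\alpha)$). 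Since $\alpha\mapsto C_p(\alpha)$ is continuous on $(0,1/2]$ with $C_p(1/2)=1$ and maximum $C_p$ attained at $\alpha_p$, its image on $(0,1/2]$ contains $[1,C_p)$ (and equals $[1,C_p]$ when $1<p<\infty$), so it suffices to construct, for each $\alpha\in(0,1/2]$, a sub-$\sigma$-algebra $\mathcal{G}_\alpha\subset\mathcal{L}$ with $c_p([0,1],\mathcal{L},\mathcal{G}_\alpha,\lambda)=C_p(\alpha)$.

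To build $\mathcal{G}_\alpha$ I would start from the product probability space $(\Omega',\mathcal{F}',\mathbf{P}'):=\big([0,1]\times\{1,2\},\,\mathcal{L}\otimes 2^{\{1,2\}},\,\lambda\times m_\alpha\big)$, which is non-atomic and standard and hence isomorphic mod null sets to $([0,1],\mathcal{L},\lambda)$ via some measure isomorphism $\psi$ (see, e.g., \cite[Theorem 9.2.2]{B07}). Concretely $\psi$ may be taken to be induced by the two-to-one folding map $f_\alpha\colon[0,1]\to[0,1]$, $f_\alpha(x)=x/\alpha$ on $[0,\alpha)$ and $f_\alpha(x)=(x-\alpha)/(1-\alpha)$ on $[\alpha,1)$: then $f_\alpha$ pushes $\lambda$ forward to $\lambda$, and the fibre $f_\alpha^{-1}(y)=\{\alpha y,\,\alpha+(1-\alpha)y\}$ carries the conditional law $\alpha\delta_{\alpha y}+(1-\alpha)\delta_{\alpha+(1-\alpha)y}$. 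I would then take $\mathcal{A}:=\{B\times\{1,2\}:B\in\mathcal{L}\}$, the sub-$\sigma$-algebra of $\mathcal{F}'$ generated by the first coordinate, and put $\mathcal{G}_\alpha:=\psi^{-1}(\mathcal{A})$ (equivalently $\mathcal{G}_\alpha=f_\alpha^{-1}(\mathcal{L})$), a sub-$\sigma$-algebra of $\mathcal{L}$.

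Since $c_p$ is a measure-isomorphism invariant, $c_p([0,1],\mathcal{L},\mathcal{G}_\alpha,\lambda)=\|I-\mathbf{E}^\mathcal{A}\|_{L^p(\Omega',\mathbf{P}')\to L^p(\Omega',\mathbf{P}')}$. To compute the latter I would represent $\eta\in L^p(\Omega',\mathbf{P}')$ by the functions $\eta_j(x):=\eta(x,j)$, $j=1,2$, on $[0,1]$; from the definition of conditional expectation $\mathbf{E}^\mathcal{A}\eta$ is the pair $(g,g)$ with $g=\alpha\eta_1+(1-\alpha)\eta_2$, so for each $x$ the pair $\big((I-\mathbf{E}^\mathcal{A})\eta\big)(x,\cdot)$ equals $\xi-\mathbf{E}\xi$ computed on $(\{1,2\},m_\alpha)$ with $\xi=\eta(x,\cdot)$. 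Integrating the two-point inequality $\|\xi-\mathbf{E}\xi\|_{L^p(m_\alpha)}\le C_p(\alpha)\|\xi\|_{L^p(m_\alpha)}$ over $x$ (exactly as in the proof of Theorem \ref{upper}, but with the sharp constant $C_p(\alpha)$ on each fibre) gives $\|(I-\mathbf{E}^\mathcal{A})\eta\|_p\le C_p(\alpha)\|\eta\|_p$; conversely, lifting a two-point extremiser to a pair of constant functions attains the bound. Hence $c_p([0,1],\mathcal{L},\mathcal{G}_\alpha,\lambda)=C_p(\alpha)$, and choosing $\alpha$ with $C_p(\alpha)=c$ gives \eqref{ccondp}. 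For $p=\infty$ the same argument works with essential suprema in place of the integrals.

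I do not expect any single step to be a genuine obstacle; the crux is spotting the construction, so that $I-\mathbf{E}^{\mathcal{G}_\alpha}$ decouples over the fibres of $f_\alpha$ into copies of the two-point operator $\xi\mapsto\xi-\mathbf{E}\xi$ on $(\{1,2\},m_\alpha)$ and its norm collapses to the already known constant $C_p(\alpha)$. The only points needing a little care are the elementary fact that a fibrewise operator has the same $L^p$-norm as on a single fibre, and the boundary cases $p\in\{1,\infty\}$: there $C_p(\alpha)=2\max(\alpha,1-\alpha)$ runs only through $[1,C_p)$ as $\alpha$ ranges over $(0,1/2]$, so the endpoint $c=C_p$ has to be supplied separately, from Franchetti's equality \eqref{cp}.
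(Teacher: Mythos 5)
Your proof is correct, and your construction is at bottom the same as the paper's: the sub-$\sigma$-algebra $f_\alpha^{-1}(\mathcal{L})$ generated by your folding map is measure-isomorphic to the paper's $\mathcal{G}_\beta$ from \eqref{Gb} (there each $x\in[\beta,1]$ is paired with $J_\beta x\in[0,\beta]$, so that $\mathbf{E}^{\mathcal{G}_\beta}$ likewise averages over two-point fibres with weights $1-\beta$ and $\beta$). Where you genuinely diverge is in evaluating $\bigl\|I-\mathbf{E}^{\mathcal{G}}\bigr\|$. The paper does not pass through the sharp two-point constant: for $1<p<\infty$ it writes $\|\xi-\mathbf{E}^{\mathcal{G}_\beta}\xi\|_p^p$ as a single integral over $[\beta,1]$, applies Minkowski's inequality, and optimises over the split $\gamma=\int_\beta^1|\xi|^p\,dx$ of the unit mass, recovering $C_p(\beta)$ by a self-contained calculation; $p=\infty$ is then done directly and $p=1$ by duality (the operator $I-\mathbf{E}^{\mathcal{G}_\beta}$ on $L^1$ being the adjoint of the same operator on $L^\infty$). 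You instead exploit the fibrewise decoupling to reduce the norm exactly to $c_p(\{1,2\},m_\alpha)=C_p(\alpha)$, quoting M\'ori's two-point result; this treats all $p\in[1,\infty]$ uniformly and avoids both the optimisation and the duality step, at the price of importing the sharp two-point constant as a black box (the paper only cites it in Section \ref{intro} rather than proving it). Two small points to tidy: the endpoint $c=C_p$ for $p\in\{1,\infty\}$ should be referred to \eqref{allelem} and \eqref{C1} rather than to \eqref{cp}, which is stated only for $1<p<\infty$; and you should note that the two-point supremum is actually attained for $p\in\{1,\infty\}$, so that the constant lift does realise the norm --- it is, by the indicator of the lighter atom for $p=1$ and by the function taking values $1$ and $-1$ for $p=\infty$.
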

\begin{proof}
Take any $\beta \in (0, 1)$ and consider the mapping
$$
J_\beta x = J_\beta(x) := \frac{\beta}{1 - \beta}\, (1 - x) , \quad x \in [\beta, 1] .
$$
It is cleat that $J_\beta$ is a homeomorphism of $[\beta, 1]$ onto $[0, \beta]$, and
$$
J_\beta^{-1} y = J_\beta^{-1}(y) = 1 - \frac{1 - \beta}{\beta}\, y , \quad y \in [0, \beta] .
$$
Let 
\begin{equation}\label{Gb}
\mathcal{G}_\beta := \left\{J_\beta(A) \cup A : \ A\subseteq [\beta, 1], \ A \in \mathcal{L}\right\} .
\end{equation}
It is easy to see that $\mathcal{G}_\beta$ is a sub-$\sigma$-algebra of $\mathcal{L}$ and that
every $\mathcal{G}_\beta$-measurable r.v. takes equal values at $x \in [\beta, 1]$ and $J_\beta x$. 
Then the condition
$$
\int_{J_\beta(A) \cup A} \xi(t)\, dt = \int_{J_\beta(A) \cup A} \mathbf{E}^{\mathcal{G}_\beta}\xi(t)\, dt \quad\mbox{for all}\quad
A\subseteq [\beta, 1], \ A \in \mathcal{L}
$$
implies that
\begin{align*}
& \mathbf{E}^{\mathcal{G}_\beta}\xi(x) = \mathbf{E}^{\mathcal{G}_\beta}\xi(J_\beta x) = (1 - \beta) \xi(x) + \beta \xi(J_\beta x) ,
\quad x \in [\beta, 1] , \\
& \mathbf{E}^{\mathcal{G}_\beta}\xi(y) = \mathbf{E}^{\mathcal{G}_\beta}\xi(J_\beta^{-1} y) = (1 - \beta) \xi(J_\beta^{-1} y) + \beta \xi(y) ,
\quad y \in [0, \beta] 
\end{align*}
for every $\xi \in L^1([0, 1], \mathcal{L}, \lambda)$. Hence
\begin{align}
\xi(y) - \mathbf{E}^{\mathcal{G}_\beta}\xi(y) &=  \xi(y) - (1 - \beta) \xi(J_\beta^{-1} y) - \beta \xi(y) \nonumber \\
&= (1 - \beta) (\xi(y) - \xi(J_\beta^{-1} y)) , \quad y \in [0, \beta] , \label{l} \\
\xi(x) - \mathbf{E}^{\mathcal{G}_\beta}\xi(x) &=  \xi(x) - (1 - \beta) \xi(x) - \beta \xi(J_\beta x) \nonumber \\
&= \beta (\xi(x) - \xi(J_\beta x)) , \quad x \in [\beta, 1]  \label{r} .
\end{align}

Suppose $1 < p < \infty$. Then
\begin{align*}
& \left\|\xi - \mathbf{E}^{\mathcal{G}_\beta}\xi\right\|_p^p = 
(1 - \beta)^p \int_0^\beta \left|\xi(y) - \xi(J_\beta^{-1} y)\right|^p dy \\
& + \beta^p \int_\beta^1 \left|\xi(x) - \xi(J_\beta x)\right|^p dx \\
& = (1 - \beta)^p \frac{\beta}{1 - \beta} \int_\beta^1 \left|\xi(x) - \xi(J_\beta x)\right|^p dx 
+ \beta^p \int_\beta^1 \left|\xi(x) - \xi(J_\beta x)\right|^p dx .
\end{align*}
Let $\kappa := \big(\beta\left((1 - \beta)^{p - 1} + \beta^{p - 1}\right)\big)^{1/p}$. Then it follows from the above that
\begin{align}
& \left\|\xi - \mathbf{E}^{\mathcal{G}_\beta}\xi\right\|_p = \kappa \left(\int_\beta^1 \left|\xi(x) - \xi(J_\beta x)\right|^p dx\right)^{1/p} \nonumber \\
& \le \kappa \left(\left(\int_\beta^1 \left|\xi(x) \right|^p dx\right)^{1/p} + \left(\int_\beta^1 \left|\xi(J_\beta x)\right|^p dx\right)^{1/p}\right)
\label{kappa} \\
& = \kappa \left(\left(\int_\beta^1 \left|\xi(x) \right|^p dx\right)^{1/p} + \left(\int_0^\beta \left|\xi(y)\right|^p dy\right)^{1/p}
\left(\frac{1 - \beta}{\beta}\right)^{1/p}\right) . \nonumber
\end{align}
Suppose $\|\xi\|_p = 1$ and let
$\gamma := \int_\beta^1 \left|\xi(x) \right|^p dx$.
Then 
$\int_0^\beta \left|\xi(y)\right|^p dy = 1 - \gamma$,
and
\begin{equation}\label{Phi}
\left\|\xi - \mathbf{E}^{\mathcal{G}_\beta}\xi\right\|_p 
\le \kappa \left(\gamma^{1/ p} + (1 - \gamma)^{1/ p}\left(\frac{1 - \beta}{\beta}\right)^{1/p}\right) =: \kappa\Psi_\beta(\gamma) .
\end{equation}
Solving $\Psi'_\beta(\gamma) = 0$, one gets
\begin{align}
& \gamma^{\frac1p - 1} - (1 - \gamma)^{\frac1p - 1}\left(\frac{1 - \beta}{\beta}\right)^{1/p} = 0  \nonumber \\ 
& \Longleftrightarrow \ \left(\frac{\gamma}{1 - \gamma}\right)^{\frac{1 - p}{p}} = \left(\frac{1 - \beta}{\beta}\right)^{1/p} \nonumber \\
& \Longleftrightarrow \ \gamma = \frac{\left(\frac{\beta}{1 - \beta}\right)^{\frac{1}{p - 1}}}{1 + \left(\frac{\beta}{1 - \beta}\right)^{\frac{1}{p - 1}}} 
= \frac{\beta^{\frac{1}{p - 1}}}{(1 - \beta)^{\frac{1}{p - 1}} + \beta^{\frac{1}{p - 1}}} \label{gamma} \\ 
& \Longleftrightarrow \ 1 - \gamma = \frac{(1 - \beta)^{\frac{1}{p - 1}}}{(1 - \beta)^{\frac{1}{p - 1}} + \beta^{\frac{1}{p - 1}}}\, .  \nonumber
\end{align}
So, $\Psi_\beta$ attains its maximum at $\gamma$ given by \eqref{gamma}, and, using the equality
$$
\frac{1}{p(p - 1)} + \frac1p = \frac{1}{p - 1}\, ,
$$
one gets
\begin{align}
\left\|\xi - \mathbf{E}^{\mathcal{G}_\beta}\xi\right\|_p 
&\le \kappa \left(\frac{\beta^{\frac{1}{p(p - 1)}}}{\left((1 - \beta)^{\frac{1}{p - 1}} + \beta^{\frac{1}{p - 1}}\right)^{\frac1p}}\right. \label{br} \\
& \quad \left. + \frac{(1 - \beta)^{\frac{1}{p(p - 1)}}}{\left((1 - \beta)^{\frac{1}{p - 1}} + \beta^{\frac{1}{p - 1}}\right)^{\frac1p}}
\left(\frac{1 - \beta}{\beta}\right)^{1/p}\right) \nonumber \\
&= \kappa\, 
\frac{\beta^{\frac{1}{p - 1}} + (1 - \beta)^{\frac{1}{p - 1}}}{\beta^{\frac1p}\left(\beta^{\frac{1}{p - 1}} + (1 - \beta)^{\frac{1}{p - 1}}\right)^{\frac1p}} \nonumber \\
&= \left((1 - \beta)^{p - 1} + \beta^{p - 1}\right)^{1/p} \left(\beta^{\frac{1}{p - 1}} + (1 - \beta)^{\frac{1}{p - 1}}\right)^{\frac{p - 1}p}  \nonumber
\end{align}
for all $\xi \in L^p([0, 1], \mathcal{L}, \lambda)$ with $\|\xi\|_p = 1$. Choosing $\xi$ such that $\int_\beta^1 \left|\xi(x) \right|^p dx$
equals $\gamma$ given by \eqref{gamma} and 
$$
\xi(J_\beta x) = - \left(\frac{(1 - \gamma)(1 - \beta)}{\gamma\beta}\right)^{1/p}\xi(x) ,
$$
one gets the equality $\|\xi\|_p = 1$ and equalities in \eqref{kappa} and \eqref{br}. Hence,
\begin{align*}
& c_p([0, 1], \mathcal{L}, \mathcal{G}_\beta, \lambda) = \left\|I - \mathbf{E}^{\mathcal{G}_\beta}\right\| \\
 &= \left((1 - \beta)^{p - 1} + \beta^{p - 1}\right)^{1/p} \left(\beta^{\frac{1}{p - 1}} + (1 - \beta)^{\frac{1}{p - 1}}\right)^{\frac{p - 1}p} 
= C_p(\beta)
\end{align*}
(see \eqref{Cp}). So, one can choose $\beta$ in such a way that $c_p([0, 1], \mathcal{L}, \mathcal{G}_\beta, \lambda) = c$ 
(cf. the end of Section \ref{intro}).

Suppose now $p = \infty$. It follows from \eqref{l}, \eqref{r} that
\begin{align*}
 \left\|\xi - \mathbf{E}^{\mathcal{G}_\beta}\xi\right\|_\infty &=
 \max\{\beta, (1 - \beta)\}\, \mathrm{ess}\!\!\sup_{x \in [\beta, 1]} |\xi(x) - \xi(J_\beta x)| \\
 &\le 2 \max\{\beta, (1 - \beta)\} \|\xi\|_\infty
\end{align*}
for all $\xi \in L^\infty([0, 1], \mathcal{L}, \lambda)$. If $\xi(J_\beta x) = - \xi(x)$, $x \in [\beta, 1]$, then
$$
 \left\|\xi - \mathbf{E}^{\mathcal{G}_\beta}\xi\right\|_\infty = 2 \max\{\beta, (1 - \beta)\} \|\xi\|_\infty .
$$
Hence 
$$
c_\infty([0, 1], \mathcal{L}, \mathcal{G}_\beta, \lambda) = \left\|I - \mathbf{E}^{\mathcal{G}_\beta}\right\|
= 2 \max\{\beta, (1 - \beta)\} .
$$
This proves \eqref{ccondp} for $c \in [1, 2)$. For $c = 2$, one can take $\mathcal{G} = \{\emptyset, [0, 1]\}$
and use \eqref{allelem}. 

Finally, suppose $p = 1$. Since the adjoint of the operator
$ I - \mathbf{E}^{\mathcal{G}_\beta} : L^1([0, 1]) \to L^1([0, 1])$
is the operator $ I - \mathbf{E}^{\mathcal{G}_\beta} : L^\infty([0, 1]) \to L^\infty([0, 1])$
(see, e.g., \cite[Theorem 6.1(vi)]{K02}), one has
\begin{align*}
c_1([0, 1], \mathcal{L}, \mathcal{G}_\beta, \lambda) & = 
\left\|I - \mathbf{E}^{\mathcal{G}_\beta}\right\|_{L^1([0, 1]) \to L^1([0, 1])} = \left\|I - \mathbf{E}^{\mathcal{G}_\beta}\right\|_{L^\infty([0, 1]) \to L^\infty([0, 1])} \\
& = c_\infty([0, 1], \mathcal{L}, \mathcal{G}_\beta, \lambda) = c .
\end{align*}
\end{proof}

The sub-$\sigma$-algebra $\mathcal{G}_\beta$ (see \eqref{Gb}) is close to the full $\sigma$-algebra $\mathcal{L}$ in the sense
that the $\sigma$-algebra generated by $\mathcal{G}_\beta$ and the set $[\beta, 1]$ coincides with $\mathcal{L}$.
It turns out that if a sub-$\sigma$-algebra $\mathcal{G}$ is much smaller than $\mathcal{L}$, then 
$c_p([0, 1], \mathcal{L}, \mathcal{G}, \lambda) = C_p$. More precisely, if $(\Omega, \mathcal{F}, \mathbf{P})$ is
a separable nonatomic probability space and there exists a r.v. $\xi$ on $(\Omega, \mathcal{F}, \mathbf{P})$, 
which is independent of a sub-$\sigma$-algebra $\mathcal{G} \subset \mathcal{F}$ and has a nontrivial Gaussian distribution, then 
$c_p(\Omega, \mathcal{F}, \mathcal{G}, \mathbf{P}) = C_p$, $1 \le p < \infty$ (see \cite{DP08}, \cite{F92}, or 
\cite[Definitions 1.5--1.7, Corollary 4.25, Corollary 6.12 and the paragraph following it]{PR13}).

\section{Estimates for compact operators on $L^p([0, 1])$ spaces}\label{3approx}

For a Banach space $X$, let $\mathfrak{F}(X)$ and $\mathcal{K}(X)$ denote the sets of bounded linear finite rank and compact linear operators 
on $X$, respectively.

\begin{definition}\label{def1} 
A Banach space $X$ is said to have the
{\sf bounded compact approximation property (BCAP)} if there
exists a constant $M \in (0, +\infty)$ such that given any $\varepsilon > 0$
and any finite set $F \subset X$, there exists an operator
$T \in \mathcal{K}(X)$ such that 
\begin{equation}\label{bcap}
\|I - T\| \le M \  \quad\mbox{and}\quad \  \|x - Tx\| < \varepsilon \quad\mbox{for all}\quad x \in F .
\end{equation}
We denote  by $M(X)$ the infimum of the constants $M$ for which the above conditions are satisfied.
\end{definition}

Many autors (see, e.g., \cite{C01}, \cite{CK}, \cite{LT1}, \cite{LT2}, and the references therein) have the condition $\|T\| \le M$ in place 
of $\|I - T\| \le M$ in the definition of BCAP and of related approximation properties. Let $m(X)$ be the infimum of the constants $M$ 
for which the conditions in this alternative definition of BCAP are satisfied. It is clear that 
$$
m(X) - 1 \le M(X) \le m(X) + 1 .
$$
If one is not interested in sharp constants, then it usually does not matter whether one knows $m(X)$ or $M(X)$. However,  the latter
appears naturally in estimates for the essential norms of operators by their measures of noncompactnes and it is desirable to know the
value of $M(X)$ (see \cite{AT87}, \cite{ES05}, \cite{LS}, \cite{S20}). It is well known that $m(L^p([0, 1])) = 1$, $1 \le p < \infty$ (see, e.g.,
\cite[Lemma 19.3.5]{P80}). The next result answers the question about the exact value of $M(L^p([0, 1]))$.

\begin{theorem}\label{approxT}
\begin{equation}\label{approx}
M(L^p([0, 1])) = C_p , \quad 1 \le p < \infty 
\end{equation}
(see \eqref{cp}, \eqref{Cp}).
\end{theorem}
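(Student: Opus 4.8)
The plan is to prove the two inequalities $M(L^p([0,1])) \le C_p$ and $M(L^p([0,1])) \ge C_p$ separately, using Theorem~\ref{upper} and Theorem~\ref{ccond} respectively as the two ends of the argument.

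\textbf{Upper bound.} First I would show $M(L^p([0,1])) \le C_p$. Given a finite set $F \subset L^p([0,1])$ and $\varepsilon > 0$, each $f \in F$ can be approximated in $L^p$ norm by a $\mathcal{G}$-measurable simple function for a suitable finite sub-$\sigma$-algebra $\mathcal{G} \subset \mathcal{L}$ generated by finitely many intervals (a common $\mathcal{G}$ works for all $f \in F$ since $F$ is finite). The conditional expectation $\mathbf{E}^{\mathcal{G}}$ is then a finite rank operator, hence compact, and by Theorem~\ref{upper} we have $\|I - \mathbf{E}^{\mathcal{G}}\| = c_p([0,1], \mathcal{L}, \mathcal{G}, \lambda) \le C_p$. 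Taking $T = \mathbf{E}^{\mathcal{G}}$ gives $\|I - T\| \le C_p$ and $\|f - Tf\| < \varepsilon$ for all $f \in F$. Since $C_p$ works for every $\varepsilon$ and every finite $F$, we get $M(L^p([0,1])) \le C_p$.

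\textbf{Lower bound.} This is the harder direction. I would argue that for \emph{any} constant $M$ for which the BCAP conditions hold, one must have $M \ge C_p$. The idea is to use the sub-$\sigma$-algebras $\mathcal{G}_\beta$ from Theorem~\ref{ccond}, for which $c_p([0,1], \mathcal{L}, \mathcal{G}_\beta, \lambda) = C_p(\beta)$, together with the extremal random variables $\xi = \xi_\beta$ constructed there that realize this ratio (or nearly realize $C_p$ when $\beta$ is chosen so that $C_p(\beta)$ is close to $C_p$). Suppose $M < C_p$; pick $\beta$ with $M < C_p(\beta) \le C_p$ and the corresponding extremal $\xi_\beta$ with $\|\xi_\beta\|_p = 1$ and $\|\xi_\beta - \mathbf{E}^{\mathcal{G}_\beta}\xi_\beta\|_p = C_p(\beta)$. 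Apply the BCAP with $F = \{\xi_\beta\}$ (and also, to control things, with the finitely many indicator-type functions spanning the range of $\mathbf{E}^{\mathcal{G}_\beta}\xi_\beta$, i.e. a finite generating set for $\mathcal{G}_\beta$-measurable structure near $\xi_\beta$) and a small $\varepsilon$, producing $T \in \mathcal{K}(L^p)$ with $\|I - T\| \le M$ and $\|\xi_\beta - T\xi_\beta\| < \varepsilon$. The key point: a compact operator on $L^p([0,1])$ must, in a quantitative sense, ``factor through'' conditioning on a small sub-$\sigma$-algebra on a large part of $[0,1]$, because $L^p([0,1])$ has no complemented subspace isomorphic to it on which a compact perturbation of the identity can have small norm — more concretely, by a gliding-hump / almost-disjointness argument one finds an isometric copy of the two-point extremal configuration on which $T$ acts almost like a rank-one (constant) operator, forcing $\|I - T\|$ on that copy to be at least $C_p(\beta) - o(1)$, contradicting $\|I-T\| \le M < C_p(\beta)$.

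\textbf{Main obstacle.} The genuine difficulty is the lower bound: transferring the lower estimate for $\|I - \mathbf{E}^{\mathcal{G}_\beta}\|$ (a statement about one specific operator) into a lower estimate for $\|I - T\|$ for an \emph{arbitrary} compact $T$ that merely approximates the identity on a finite set. The bridge should be that compact operators are ``small at infinity'' in $L^p$: given any compact $T$, one can find an almost-isometric copy $Y \subset L^p([0,1])$ of $L^p$ of a two-point space (spanned by two almost-disjointly-supported normalized functions mimicking $\xi_\beta$ on the two halves $[0,\beta]$ and $[\beta,1]$ glued by $J_\beta$) on which $T$ is nearly constant-valued, so that the restriction behaves like $I - \mathbf{E}$ on that two-point space and hence has norm $\ge C_p(\beta) - o(1)$. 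Making this ``nearly constant on a well-chosen copy'' step precise — likely via the fact that $T$ maps the unit ball into a norm-compact set while the relevant normalized functions can be pushed to be almost weakly null after subtracting their conditional expectations — is where the real work lies. Once that is in place, letting $\beta$ range so that $C_p(\beta) \to C_p$ yields $M \ge C_p$, and combined with the upper bound this gives \eqref{approx}.
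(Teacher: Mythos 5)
Your upper bound $M(L^p([0,1])) \le C_p$ is correct and coincides with the paper's argument: approximate the finite set $F$ by simple functions subordinate to a common finite partition, take $T = \mathbf{E}^{\mathcal{G}}$ for the generated finite sub-$\sigma$-algebra (a finite rank, hence compact, operator), and invoke Theorem \ref{upper} together with $\|\mathbf{E}^{\mathcal{G}}\| = 1$.

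The lower bound, however, contains a genuine gap, and you have located it yourself: the assertion that a compact $T$ ``acts almost like a rank-one (constant) operator on a well-chosen almost-isometric copy of the two-point extremal configuration'' is exactly the hard part, and it is described rather than proved. Two concrete points. First, Theorem \ref{ccond} and the $\sigma$-algebras $\mathcal{G}_\beta$ are not the right tools here: they compute $\|I-\mathbf{E}^{\mathcal{G}_\beta}\|$ for specific operators, whereas $M(X)$ involves \emph{all} compact $T$ satisfying the approximation condition, so a lower bound for one particular conditional expectation transfers to nothing; the relevant extremal operator is $I-\mathbf{E}$ itself, with $\|I-\mathbf{E}\|=C_p$. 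Second, ``almost weakly null'' only tells you that $T$ sends certain sequences to norm-null ones; to get $\|I-T\|\ge C_p-o(1)$ you must arrange, on a \emph{single} test function, both that $T$ nearly annihilates its mean-zero part and that $T$ acts nearly as the identity on its mean, and the latter must be extracted from the sole hypothesis $\|x-Tx\|<\varepsilon$ for one unit vector $x$. The paper accomplishes this in Lemma \ref{gammaL} and Theorem \ref{gammaT}: (i) replace $T$ by a finite-rank $K_1$ whose input functionals are averages over a partition $\{A_k\}$; (ii) transplant a near-extremal function $\chi$ for $I-\gamma\mathbf{E}$ into every cell $A_k$ via measure isomorphisms, preserving the mean $\kappa$, so that $K_1(\psi-\kappa\mathbbm{1})=0$ and hence $K_1\psi=\kappa K_1\mathbbm{1}\approx\gamma\kappa\mathbbm{1}$; (iii) conjugate $T$ by an explicit isometric automorphism $J$ of $L^p$ sending $\mathbbm{1}$ to the normalized near-invariant vector, which converts $\|x-Tx\|<\varepsilon$ into $\|\mathbbm{1}-T_0\mathbbm{1}\|$ small for $T_0=J^{-1}TJ$. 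This yields \eqref{gamma1}, after which the lower bound follows by taking $F$ to be a single unit vector in \eqref{bcap} and letting $\varepsilon\to 0$. Your intuition matches the mechanism of (i)--(ii), but without carrying out these constructions the inequality $M(L^p([0,1]))\ge C_p$ is not established.
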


The above result implies that $M(L^1([0, 1])) = 2$. This equality and $M(L^\infty([0, 1]))$ $= 2$ follow from the well known fact
that the spaces $L^1([0, 1])$ and $L^\infty([0, 1])$ have the so called Daugavet property, i.e. $\|I + T\| = 1 + ||T||$
for every $T \in \mathcal{K}(L^p([0, 1]))$, $p = 1$ or $\infty$ (see \cite{D63}, \cite{BP82}, \cite{L66}, and \cite[Ch. 6]{PR13}).

The proof of \eqref{approx} consists of proving the inequalities $M(L^p([0, 1])) \le C_p$ and $M(L^p([0, 1])) \ge C_p$. 
We prove the former with the help of Theorem \ref{upper} and derive the latter from an estimate for compact
operators on $L^p([0, 1])$ (see Theorem \ref{gammaT}), which we think might be of an independent interest. 

\begin{lemma}\label{gammaL}
Let $1 \le p < \infty$, $\delta \ge 0$, $\gamma \in \mathbb{C}$, and let $T \in \mathcal{K}(L^p([0, 1]))$ be such that 
$$
\|\gamma\mathbbm{1} - T\mathbbm{1}\|_{L^p} \le \delta .
$$
Then
\begin{equation}\label{delta}
\|I - T\|_{L^p \to L^p} \ge \|I -  \gamma\mathbf{ E}\|_{L^p \to L^p} - \delta .
\end{equation}
\end{lemma}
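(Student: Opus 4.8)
The plan is to exhibit a specific family of test functions on which the operator $I - T$ acts almost like $I - \gamma\mathbf{E}$, exploiting compactness of $T$ to discard the ``genuinely infinite-dimensional'' part of its action. The key observation is that if $\xi$ is a normalised function supported on a small set, then $T\xi$ is small in $L^p$ by an equiintegrability/compactness argument, so $(I - T)\xi$ is close to $\xi$; meanwhile, by translating and rescaling one can build, from a fixed $\eta$ with $\mathbf{E}\eta = 0$, functions that ``look like'' $\eta$ plus a thin bump, and track how $I - T$ distorts them.

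First I would fix $\eta \in L^p([0,1])$ with $\mathbf{E}\eta = 0$, $\|\eta\|_p = 1$, chosen so that $\|(I - \gamma\mathbf{E})\eta\|_p = \|\eta\|_p = 1$ is close to $\|I - \gamma\mathbf{E}\|_{L^p\to L^p}$; more precisely, since $(I - \gamma\mathbf{E})$ restricted to the mean-zero subspace is just the identity, and on $\mathbbm{1}$ it gives $(1-\gamma)\mathbbm{1}$, the norm $\|I - \gamma\mathbf{E}\|_{L^p\to L^p}$ is attained (or approached) by functions of the form $\eta + t\mathbbm{1}$. So I would actually take test functions $\xi_n := \eta_n + \mathbbm{1}$ where $\eta_n$ is a version of such an (almost) extremal mean-zero function ``pushed onto'' a set $A_n$ with $\lambda(A_n) \to 0$, normalised appropriately, while keeping the $\mathbbm{1}$ part intact. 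The point of shrinking the support of the mean-zero part is twofold: the mean-zero bumps $\eta_n$ converge weakly to $0$, so by compactness $\|T\eta_n\|_p \to 0$; and simultaneously $(I - \gamma\mathbf{E})\xi_n = \eta_n + (1-\gamma)\mathbbm{1}$ has $L^p$-norm approaching the value we want on the right-hand side of \eqref{delta}.

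Then I would estimate, for such $\xi_n$,
\[
\|(I - T)\xi_n\|_p \ge \|(I - \gamma\mathbf{E})\xi_n\|_p - \|T\xi_n - \gamma\mathbf{E}\xi_n\|_p ,
\]
and bound the second term: write $\xi_n = \eta_n + \mathbbm{1}$, so $T\xi_n - \gamma\mathbf{E}\xi_n = T\eta_n + (T\mathbbm{1} - \gamma\mathbbm{1}) - \gamma\mathbf{E}\eta_n = T\eta_n + (T\mathbbm{1} - \gamma\mathbbm{1})$ since $\mathbf{E}\eta_n = 0$ and $\mathbf{E}\mathbbm{1} = \mathbbm{1}$. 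The middle term has norm $\le \delta$ by hypothesis, and $\|T\eta_n\|_p \to 0$ by compactness and weak convergence $\eta_n \rightharpoonup 0$. Dividing by $\|\xi_n\|_p$ (which I would arrange to converge to a controlled value, or normalise from the start) and letting $n \to \infty$, the lower bound becomes $\|I - \gamma\mathbf{E}\|_{L^p\to L^p} - \delta$, as desired. The case $p = 1$ needs the equiintegrability argument phrased correctly since weak convergence in $L^1$ is more delicate, but putting mass on a vanishing set with bounded $L^1$-norm is exactly the setting where one uses that a compact operator maps such a sequence to one converging in norm (a compact operator on $L^1$ maps bounded sequences tending to $0$ in measure on sets of vanishing measure to norm-null sequences, or one may simply note $\eta_n \to 0$ weakly in $L^1$ if $\{\eta_n\}$ is equiintegrable — which requires care in the construction).

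The main obstacle I expect is making the weak-null claim for the mean-zero test functions $\eta_n$ rigorous while simultaneously keeping $\|(I - \gamma\mathbf{E})\xi_n\|_p$ as large as the full operator norm $\|I - \gamma\mathbf{E}\|$: shrinking supports kills the $L^\infty$-type contributions and may shrink the relevant norm unless the construction is done carefully, so one likely needs to decompose the (near-)extremal function for $\|I-\gamma\mathbf{E}\|$ into a mean-zero piece and a multiple of $\mathbbm{1}$ and only compress the mean-zero piece. Fortunately $\|I - \gamma\mathbf{E}\|_{L^p\to L^p}$ is explicitly computable (it is attained on two-valued functions or limits thereof, in the spirit of the computations in Sections \ref{intro}--\ref{main}), and the compression of a two-valued mean-zero function onto a vanishing-measure set is transparent, so the construction should go through. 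I would also double-check the $p=1$ weak-compactness step, as it is the one place where a naive argument can fail.
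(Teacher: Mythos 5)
There is a genuine gap: the concentration step destroys exactly the quantity you need to preserve. Suppose you take $\xi_n = \eta_n + s\mathbbm{1}$ with $\mathbf{E}\eta_n = 0$, with $\eta_n$ supported on $A_n$, $\lambda(A_n) \to 0$, and $\|\eta_n\|_p^p = a$ held fixed. Then $(I - \gamma\mathbf{E})\xi_n = \eta_n + s(1-\gamma)\mathbbm{1}$, and since $|\eta_n|$ is of order $(a/\lambda(A_n))^{1/p} \to \infty$ on its support, Minkowski's inequality gives $\int_{A_n}|\eta_n + c|^p\,dt \to a$ for every constant $c$. Hence
\[
\frac{\|(I - \gamma\mathbf{E})\xi_n\|_p^p}{\|\xi_n\|_p^p} \ \longrightarrow\ \frac{a + |1-\gamma|^p|s|^p}{a + |s|^p} \ \le\ \max\bigl\{1, |1-\gamma|^p\bigr\},
\]
no matter how $a$ and $s$ are chosen. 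For $\gamma = 1$ and $p \ne 2$ --- precisely the case needed for Theorem \ref{gammaT} and Theorem \ref{approxT} --- this ceiling equals $1$, whereas $\|I - \mathbf{E}\| = C_p > 1$, so your scheme can only yield $\|I - T\| \ge 1 - \delta$ instead of \eqref{delta}. The reason is structural: the (near-)extremal functions for $\|I - \gamma\mathbf{E}\|$ are two-valued with level sets of measures $\alpha_p$ and $1-\alpha_p$, and their extremality comes from the interference between the mean-zero part and the constant part on sets of full measure; once the mean-zero part is squeezed onto a vanishing set, the two parts become asymptotically disjointly supported in $L^p$ and the ratio collapses. Your own caveat about "only compressing the mean-zero piece" does not save this --- the computation above is exactly for that decomposition.

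The underlying intuition --- compactness turns weak nullity into norm nullity --- is sound, but it has to be implemented by oscillation rather than concentration: for instance, replace the near-extremal $\chi$ by $\chi_n(t) := \chi(\{nt\})$, which has the same distribution, hence the same $\|\chi_n\|_p$, $\mathbf{E}\chi_n$ and $\|\chi_n - \gamma\mathbf{E}\chi_n\|_p$, while $\chi_n - \mathbf{E}\chi_n \rightharpoonup 0$ weakly (also in $L^1$, by the Fej\'er/periodization lemma together with the fact that compact operators map weakly convergent sequences to norm convergent ones); with that substitution your chain of inequalities goes through verbatim. The paper takes a different route that avoids weak convergence altogether: it approximates $T$ by a finite-rank operator whose functionals are averages over a measurable partition $\{A_k\}$, transplants a copy of the near-extremal function onto each cell $A_k$ via a measure isomorphism, and observes that this finite-rank operator annihilates the mean-zero part exactly; the cells carry full relative measure, so no extremality is lost.
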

\begin{proof}
Take an arbitrary $\varepsilon > 0$. Since $T \in \mathcal{K}(L^p)$, there exists $K_0 \in \mathfrak{F}(L^p)$ such that 
$\|T - K_0\|_{L^p \to L^p} \le \varepsilon$ (see, e.g., \cite[Sections 1a and 1e]{LT1}). The operator $K_0$ admits the following representation
$$
K_0 f = \sum_{j = 1}^N \left(\int_0^1 g_j(t) f(t)\, dt \right) h_j , \ \ \ \forall f \in L^p([0, 1]) ,
$$
where $g_j \in L^{p'}([0, 1])$, $h_j \in L^p([0, 1])$, $j = 1, \dots, N$, $N \in \mathbb{N}$. 
Approximating the functions $g_j$ by simple functions and rearranging the terms,
one can construct an operator $K_1 \in \mathfrak{F}(L^p)$ such that $\|K_0 - K_1\|_{L^p \to L^p} \le \varepsilon$ and
$$
K_1 f = \sum_{k = 1}^M \left(\int_{A_k}  f(t)\, dt \right) \varphi_k , \ \ \ \forall f \in L^p([0, 1]) ,
$$
where $\varphi_k  \in L^p([0, 1])$, $k = 1, \dots, M$, $M \in \mathbb{N}$, and $A_k$ are pairwise disjoint measurable subsets of $[0, 1]$ 
of positive measure such that $[0, 1] = \cup_{k = 1}^M A_k$.

Let $\nu_p(\gamma) := \|I - \gamma \mathbf{ E}\|_{L^p([0, 1]) \to L^p([0, 1])}$. There exists $\chi \in L^p([0, 1])$ such that 
$\|\chi\|_{L^p} =1$ and $\|\chi - \gamma \mathbf{E} \chi\|_{L^p} \ge \nu_p(\gamma) - \varepsilon$. Let $\kappa := \mathbf{E}\chi$.
The probability space $(A_k, \mathcal{L}_k, \mathbf{P}_k)$, where
$$
\mathcal{L}_k := \{A\cap A_k : \ A \in \mathcal{L}\} , \quad \mathbf{P}_k := \frac{1}{\lambda(A_k)}\, \lambda ,
$$
is isomorphic (modulo sets of measure $0$) to $([0, 1], \mathcal{L}, \lambda)$ (see, e.g., \cite[Theorem 9.2.2 and Corollary 6.6.7]{B07}). 
Let $w_k : A_k \to [0, 1]$ be such an isomorphism and let $\chi_k := \chi\circ w_k$. Then
$\|\chi_k\|_{L^p(A_k, \mathbf{P}_k)} =1$, $\|\chi_k - \gamma \mathbf{E}_k \chi_k\|_{L^p(A_k, \mathbf{P}_k)} \ge \nu_p(\gamma) - \varepsilon$,
and $\mathbf{E}_k\chi_k = \kappa$, where $\mathbf{E}_k\xi := \int_{A_k} \xi(t)\, d\mathbf{P}_k(t) 
= \frac{1}{\lambda(A_k)}\int_{A_k} \xi(t)\, dt$. Finally, define $\psi$ by $\psi(t) = \chi_k(t)$, $t \in A_k$, $k = 1, \dots, M$. 

Since
$$
\int_{A_k} \left(\chi_k(t) - \kappa\right)\, dt = \kappa\lambda(A_k)  - \kappa\lambda(A_k) = 0 ,
$$
one gets $K_1\left(\psi - \kappa \mathbbm{1}\right) = 0$ and $\mathbf{E}\psi = \kappa$.  Hence
\begin{align*}
& \|(I -K_1)\psi\|_{L^p} = \|\psi -K_1\psi\|_{L^p} = \left\|\psi -K_1\left(\kappa \mathbbm{1}\right)\right\|_{L^p} \\
& \ge \left\|\psi - T\left(\kappa \mathbbm{1}\right)\right\|_{L^p} - 2\varepsilon \left\|\kappa \mathbbm{1}\right\|_{L^p} \ge
\left\|\psi - \gamma\kappa \mathbbm{1}\right\|_{L^p} - |\kappa|\|\gamma\mathbbm{1} - T\mathbbm{1}\|_{L^p} - 2\varepsilon |\kappa| \\
& \ge \left(\sum_{k = 0}^M \int_{A_k} \left|\chi_k(t) - \gamma\kappa\right|^p\, dt\right)^{1/p} - (\delta + 2\varepsilon) |\kappa| \\
& = \left(\sum_{k = 0}^M \int_{A_k} \left|\chi_k(t) - \gamma\mathbf{E}_k\chi_k\right|^p\, dt\right)^{1/p} - (\delta + 2\varepsilon) |\kappa| \\
& \ge \left(\sum_{k = 0}^M (\nu_p(\gamma) - \varepsilon)^p\int_{\Omega_k} \left|\chi_k(t)\right|^p\, dt\right)^{1/p} - (\delta + 2\varepsilon) \|\psi\|_{L^p} \\
& = (\nu_p(\gamma) - \delta - 3\varepsilon) \|\psi\|_{L^p} .
\end{align*}
So, $\|I - K_1\|_{L^p \to L^p} \ge \nu_p(\gamma) - \delta - 3\varepsilon$ and hence
$$
\|I - T\|_{L^p \to L^p} \ge \nu_p(\gamma) - \delta - 5\varepsilon , \ \ \ \forall \varepsilon > 0 ,
$$
i.e. \eqref{delta} holds.
\end{proof}

\begin{theorem}\label{gammaT}
Let $1 \le p < \infty$, $\gamma \in \mathbb{C}$, and let $T \in \mathcal{K}(L^p([0, 1]))$. Then 
\begin{equation}\label{delta1}
\|I - T\|_{L^p \to L^p} + \inf_{\|u\|_{L^p} = 1} \|(\gamma I - T)u\|_{L^p} \ge \|I -  \gamma\mathbf{ E}\|_{L^p \to L^p} .
\end{equation}
In particular,
\begin{equation}\label{gamma1}
\|I - T\|_{L^p \to L^p} + \inf_{\|u\|_{L^p} = 1} \|(I - T)u\|_{L^p} \ge \|I - \mathbf{ E}\|_{L^p \to L^p} = C_p 
\end{equation}
(see \eqref{OpC} and \eqref{cp}).
\end{theorem}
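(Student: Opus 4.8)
The plan is to derive Theorem~\ref{gammaT} from Lemma~\ref{gammaL}, which already contains the analytic core. The key observation is that Lemma~\ref{gammaL} establishes, for any $T \in \mathcal{K}(L^p)$ and any $\delta \ge 0$ with $\|\gamma\mathbbm{1} - T\mathbbm{1}\|_{L^p} \le \delta$, the bound $\|I - T\|_{L^p \to L^p} \ge \|I - \gamma\mathbf{E}\|_{L^p \to L^p} - \delta$. The cleanest choice is simply $\delta := \|\gamma\mathbbm{1} - T\mathbbm{1}\|_{L^p} = \|(\gamma I - T)\mathbbm{1}\|_{L^p}$, which is admissible. Since $\|\mathbbm{1}\|_{L^p} = 1$ on $[0,1]$, this gives
\[
\|(\gamma I - T)\mathbbm{1}\|_{L^p} \ge \inf_{\|u\|_{L^p} = 1} \|(\gamma I - T)u\|_{L^p} ,
\]
so Lemma~\ref{gammaL} with this $\delta$ yields exactly
\[
\|I - T\|_{L^p \to L^p} \ge \|I - \gamma\mathbf{E}\|_{L^p \to L^p} - \|(\gamma I - T)\mathbbm{1}\|_{L^p} \ge \|I - \gamma\mathbf{E}\|_{L^p \to L^p} - \inf_{\|u\|_{L^p} = 1} \|(\gamma I - T)u\|_{L^p} ,
\]
which rearranges to \eqref{delta1}. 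Here the fact that the constant function $\mathbbm{1}$ has unit $L^p$-norm on the probability space $([0,1], \mathcal{L}, \lambda)$ is what lets us pass from the particular test vector $\mathbbm{1}$ in Lemma~\ref{gammaL} to the infimum over the unit sphere.

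For the second assertion \eqref{gamma1}, I would just specialise \eqref{delta1} to $\gamma = 1$, so that $\gamma I - T = I - T$ and $\gamma \mathbf{E} = \mathbf{E}$. This produces
\[
\|I - T\|_{L^p \to L^p} + \inf_{\|u\|_{L^p} = 1}\|(I - T)u\|_{L^p} \ge \|I - \mathbf{E}\|_{L^p \to L^p} .
\]
It remains to identify $\|I - \mathbf{E}\|_{L^p \to L^p}$ with $C_p$. But $\mathbf{E} = \mathbf{E}^{\mathcal{G}}$ for the trivial $\sigma$-algebra $\mathcal{G} = \{\emptyset, [0,1]\}$, so $\|I - \mathbf{E}\|_{L^p \to L^p} = c_p([0,1], \mathcal{L}, \lambda)$ in the notation of \eqref{OpC}, and this equals $C_p$ by \eqref{cp} (Franchetti's theorem, whose upper-bound half is also recorded as \eqref{cCp} and whose conditional generalisation is Theorem~\ref{upper}). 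Substituting gives \eqref{gamma1}.

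There is essentially no obstacle here once Lemma~\ref{gammaL} is in hand: the whole argument is the choice $\delta = \|(\gamma I - T)\mathbbm{1}\|_{L^p}$ together with the triviality $\|\mathbbm{1}\|_{L^p} = 1$, plus a citation to \eqref{cp} for the evaluation $\|I - \mathbf{E}\|_{L^p \to L^p} = C_p$. If one wanted Theorem~\ref{gammaT} to be genuinely self-contained, the real work is all inside Lemma~\ref{gammaL} — the finite-rank approximation of $T$, the reduction to an operator $K_1$ built from averaging functionals over a measurable partition $\{A_k\}$, the choice of the test function $\psi$ that is killed by $K_1$ modulo constants while having prescribed conditional averages $\mathbf{E}_k\psi = \kappa$ on each $A_k$, and the string of inequalities exploiting $\|\chi_k - \gamma\mathbf{E}_k\chi_k\|_{L^p(A_k,\mathbf{P}_k)} \ge \nu_p(\gamma) - \varepsilon$ piece by piece. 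For the present theorem, though, that machinery is already available, and the proof is a two-line deduction.
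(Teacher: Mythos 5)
Your reduction to Lemma \ref{gammaL} via the choice $\delta = \|(\gamma I - T)\mathbbm{1}\|_{L^p}$ does not work: the second inequality in your displayed chain points the wrong way. Since $\|\mathbbm{1}\|_{L^p}=1$, you correctly have
\[
\|(\gamma I - T)\mathbbm{1}\|_{L^p} \ \ge\ \inf_{\|u\|_{L^p}=1}\|(\gamma I - T)u\|_{L^p},
\]
but subtracting both sides from $\|I-\gamma\mathbf{E}\|_{L^p\to L^p}$ reverses the inequality:
\[
\|I-\gamma\mathbf{E}\|_{L^p\to L^p} - \|(\gamma I - T)\mathbbm{1}\|_{L^p} \ \le\ \|I-\gamma\mathbf{E}\|_{L^p\to L^p} - \inf_{\|u\|_{L^p}=1}\|(\gamma I - T)u\|_{L^p}.
\]
So Lemma \ref{gammaL} with your $\delta$ only bounds $\|I-T\|$ from below by the \emph{smaller} of the two right-hand sides, which is strictly weaker than \eqref{delta1}. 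The gap is genuine, not cosmetic: the infimum in \eqref{delta1} may be (nearly) attained at a unit vector $u_0$ far from constant, in which case $\|(\gamma I - T)\mathbbm{1}\|_{L^p}$ can be much larger than the infimum and your bound loses exactly that difference. For instance, if $T\mathbbm{1}=0$ while $Tu_0\approx\gamma u_0$ for some other unit vector $u_0$, your argument yields only $\|I-T\|\ge\|I-\gamma\mathbf{E}\|-|\gamma|$, whereas the theorem asserts $\|I-T\|\ge\|I-\gamma\mathbf{E}\|$ up to an arbitrarily small error.

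The missing idea --- and the real content of the theorem beyond Lemma \ref{gammaL} --- is an isometric conjugation that moves the near-minimizer onto the constant function. Pick $u_0$ with $\|u_0\|_{L^p}=1$ and $\|(\gamma I-T)u_0\|_{L^p}<\delta+\varepsilon$, perturb it to $v\ne 0$ a.e.\ with $\|\gamma v - Tv\|_{L^p}\le(\delta+2\varepsilon)\|v\|_{L^p}$, set $v_0=v/\|v\|_{L^p}$, and define the isometric automorphism $(Jf)(t)=v_0(t)f(w(t))$ of $L^p([0,1])$, where $w(t)=\int_0^t|v_0(x)|^p dx$; then $J\mathbbm{1}=v_0$. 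The operator $T_0:=J^{-1}TJ$ is compact, satisfies $\|\gamma\mathbbm{1}-T_0\mathbbm{1}\|_{L^p}=\|\gamma v_0 - Tv_0\|_{L^p}\le\delta+2\varepsilon$, and $\|I-T_0\|=\|I-T\|$, so Lemma \ref{gammaL} applied to $T_0$ gives \eqref{delta1} after letting $\varepsilon\to 0$. Your treatment of \eqref{gamma1} (specialising to $\gamma=1$ and citing \eqref{cp} for $\|I-\mathbf{E}\|_{L^p\to L^p}=C_p$) is fine.
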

\begin{proof}
Take an arbitrary $\varepsilon > 0$. Let
$$
\delta := \inf_{\|u\|_{L^p} = 1} \|(\gamma I - T)u\|_{L^p} .
$$
There exists $u_0  \in L^p([0, 1])$ such that $\|u_0\|_{L^p} = 1$ and $\|(\gamma I - T)u_0\|_{L^p} < \delta + \epsilon$. Then
there exists an approximation $v  \in L^p([0, 1])$ of $u_0$ such that $v \not= 0$ almost everywhere in $[0, 1]$ and
$$
\|\gamma v - Tv\|_{L^p} \le (\delta + 2\varepsilon) \|v\|_{L^p} .
$$
Let $v_0 := v/\|v\|_{L^p}$ and
$$
w(t) :=  \int_0^t |v_0(x)|^p dx .
$$
Then $w$ is a strictly increasing absolutely continuous function that maps $[0, 1]$ onto itself. Consider the operator $J$ defined by
$$
(J f)(t) := v_0(t) f(w(t)) , \ \ \ t \in [0, 1] .
$$
It is easy to see that $J$ is an isometric automorphism of $L^p$ and $J\mathbbm{1} = v_0$.

Set $T_0 := J^{-1}TJ  \in \mathcal{K}(L^p)$. Then 
$$
\|\gamma\mathbbm{1} - T_0\mathbbm{1}\|_{L^p} = \|\gamma J^{-1}v_0 - J^{-1}Tv_0\|_{L^p} 
= \|\gamma v_0 - Tv_0\|_{L^p}\le \delta + 2\varepsilon ,
$$
and it follows from Lemma \ref{gammaL} applied to $T_0$ that
\begin{align*}
\|I - T\|_{L^p \to L^p} &= \|JJ^{-1} - JT_0J^{-1}\|_{L^p \to L^p} = \|I - T_0\|_{L^p \to L^p} \\
&\ge \|I -  \gamma\mathbf{ E}\|_{L^p \to L^p} - \delta - 2\varepsilon  \quad\mbox{for all}\quad \varepsilon > 0 .
\end{align*}
\end{proof}
Theorem \ref{gammaT} remains valid for narrow operators $T \in \mathcal{B}(L^p([0, 1]))$ (\cite{SS2}; see \cite{PR13} for information on narrow operators).

\begin{corollary}\label{eigen}
Let $1 \le p < \infty$. If $\gamma \in \mathbb{C}$ is an eigenvalue of $T \in \mathcal{K}(L^p([0, 1]))$, then 
\begin{equation}\label{delta0}
\|I - T\|_{L^p \to L^p} \ge \|I -  \gamma\mathbf{ E}\|_{L^p \to L^p} .
\end{equation}
In particular, if $I - T$ is not invertible, then
\begin{equation}\label{gamma0}
\|I - T\|_{L^p \to L^p} \ge C_p .
\end{equation}
\end{corollary}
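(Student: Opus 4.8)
The plan is to derive Corollary \ref{eigen} directly from Theorem \ref{gammaT}. Suppose $\gamma \in \mathbb{C}$ is an eigenvalue of $T \in \mathcal{K}(L^p([0,1]))$. Then there exists a nonzero $u \in L^p([0,1])$ with $Tu = \gamma u$, and after normalising we may assume $\|u\|_{L^p} = 1$. For this $u$ we have $(\gamma I - T)u = 0$, hence
\[
\inf_{\|u\|_{L^p} = 1} \|(\gamma I - T)u\|_{L^p} = 0 .
\]
Substituting this into \eqref{delta1} of Theorem \ref{gammaT} immediately yields
\[
\|I - T\|_{L^p \to L^p} \ge \|I - \gamma \mathbf{E}\|_{L^p \to L^p} ,
\]
which is \eqref{delta0}.

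For the ``in particular'' statement, recall that $T$ is compact, so by the Fredholm alternative $I - T$ fails to be invertible precisely when $1$ is an eigenvalue of $T$, i.e. when there is a nonzero $u$ with $Tu = u$. Applying \eqref{delta0} with $\gamma = 1$ gives
\[
\|I - T\|_{L^p \to L^p} \ge \|I - \mathbf{E}\|_{L^p \to L^p} = C_p ,
\]
where the final equality is \eqref{cp} (equivalently \eqref{gamma1}). This establishes \eqref{gamma0}.

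There is essentially no obstacle here: the entire content has been packed into Theorem \ref{gammaT}, and the corollary is simply the observation that an eigenvector realises the infimum $\delta = 0$ in \eqref{delta1}, combined with the standard Fredholm-alternative characterisation of non-invertibility of $I - T$ for compact $T$. The only point worth a line of care is that one should explicitly invoke compactness (via the Riesz–Schauder theory) to pass from ``$I - T$ not invertible'' to ``$1$ is an eigenvalue of $T$''; without compactness, non-invertibility could occur through $1$ lying in the continuous spectrum, and the argument would not apply.
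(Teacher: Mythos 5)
Your proof is correct and is exactly the intended derivation: the paper states Corollary \ref{eigen} without proof as an immediate consequence of Theorem \ref{gammaT}, and your argument (eigenvector makes the infimum in \eqref{delta1} vanish, then Riesz--Schauder theory reduces non-invertibility of $I-T$ to $1$ being an eigenvalue) fills in precisely the steps the authors leave implicit. The remark about needing compactness to exclude $1$ from the continuous spectrum is a sensible point of care.
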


\begin{proof}[Proof of Theorem \ref{approxT}]
It follows from Theorem \ref{gammaT} that for every operator $T \in \mathcal{K}(X)$ satisfying 
the second inequality in \eqref{bcap} the following estimate holds
$$
\|I - T\|_{L^p \to L^p} + \varepsilon \ge  C_p .
$$
Hence $M(L^p([0, 1])) \ge C_p$. 

To prove the opposite inequality, take any $\varepsilon > 0$ and any finite set $\{f_1, \dots, f_N\}$ $\subset$ $L^p([0, 1])$.
There exist a partition of $[0, 1]$ into pairwise disjoint measurable sets $A_k$, $k = 1, \dots, M$  
of positive measure and simple functions $g_1, \dots, g_N$ that are constant on each $A_k$ and satisfy the inequalities
$\|f_n - g_n\|_{L^p([0, 1])} < \varepsilon/2$, $n = 1, \dots, N$. Let $\mathcal{G}$ be the sub-$\sigma$-algebra of $\mathcal{L}$
generated by the sets $A_k$, $k = 1, \dots, M$
and consider the conditional expectation operator  $\mathbf{E}^\mathcal{G} : L^p([0, 1]) \to L^p([0, 1])$. The range of 
$\mathbf{E}^\mathcal{G}$ is the linear span of the indicator functions of the sets $A_k$ and hence is an $M$ dimensional
linear subspace of $L^p([0, 1])$. So, $\mathbf{E}^\mathcal{G} \in \mathfrak{F}(L^p([0, 1])) \subset\mathcal{K}(L^p([0, 1]))$.
According to Theorem \ref{upper}, $\left\|I - \mathbf{E}^\mathcal{G}\right\|_{L^p \to L^p} \le C_p$. Further, 
$\mathbf{E}^\mathcal{G} g_n = g_n$ by construction, and
\begin{align*}
\left\|f_n - \mathbf{E}^\mathcal{G} f_n\right\|_{L^p} &\le \left\|f_n - g_n\right\|_{L^p} 
+ \left\|g_n - \mathbf{E}^\mathcal{G} f_n\right\|_{L^p} < \frac{\varepsilon}{2} + \left\|\mathbf{E}^\mathcal{G}(g_n -  f_n)\right\|_{L^p} \\
&\le \frac{\varepsilon}{2} + \left\|g_n -  f_n\right\|_{L^p} < \varepsilon , \quad  n = 1, \dots, N ,
\end{align*}
since $\left\|\mathbf{E}^\mathcal{G}\right\|_{L^p \to L^p} = 1$. Hence $M(L^p([0, 1])) \le C_p$.
\end{proof}

All results of this section remain true for $L^p(\Omega, \mathcal{B}, \mu)$, where
$\Omega$ is a complete separable metric space, $\mathcal{B}$ is the Borel $\sigma$-algebra of $\Omega$, and $\mu$
is a nonatomic finite measure, since $(\Omega, \mathcal{B}, \frac{1}{\mu(\Omega)}\mu)$ is isomorphic, 
modulo sets of measure $0$, to $([0, 1], \mathcal{L}, \lambda)$ (see, e.g., \cite[Theorem 9.2.2]{B07}).

\end{document}